\documentclass[a4paper]{article}
\usepackage[utf8]{inputenc}
\usepackage[english]{babel}
\usepackage{amsmath}
\usepackage{amssymb}
\usepackage{amsthm}
\usepackage{hyperref}
\usepackage{enumerate}
\usepackage{a4wide}
\usepackage{pgfplotstable}
\usepackage{pgfplots}
\usepackage[ruled,linesnumbered]{algorithm2e}

\bibliographystyle{abbrv}

\newtheorem{theorem}{Theorem}

\newtheorem{proposition}[theorem]{Proposition}
\newtheorem{corollary}[theorem]{Corollary}
\theoremstyle{definition}
\newtheorem{example}[theorem]{Example}
\newtheorem{remark}[theorem]{Remark}

\newcommand{\R}{\mathbb{R}}
\newcommand{\N}{\mathbb{N}}
 %Trans for matrices

\DeclareMathOperator{\lineal}{lineal}
\DeclareMathOperator{\cone}{cone}

\DeclareMathOperator{\dom}{dom}
\DeclareMathOperator{\epi}{epi}

\DeclareMathOperator{\cl}{cl}

\newcommand*{\colonequals}{\mathrel{\vcenter{\baselineskip0.5ex%
\lineskiplimit0pt\hbox{\scriptsize.}\hbox{\scriptsize.}}}=}

\newcommand\st{\,\middle\vert\;} 

\begin{document}
\title{Solving polyhedral d.c.\ optimization problems via concave minimization}

\author{Simeon vom Dahl\textsuperscript{1}
  \and
  Andreas Löhne\textsuperscript{2}
}

\footnotetext[1]{Université Paris-Sud, Department of Mathematics, 91400 Orsay, France}
\footnotetext[2]{Friedrich Schiller University Jena, Department of
Mathematics, 07737 Jena, Germany, andreas.loehne@uni-jena.de}

\maketitle

\begin{abstract} The problem of minimizing the difference of two convex functions is called polyhedral d.c.\ optimization problem if at least one of the two component functions is polyhedral. We characterize the existence of global optimal solutions of polyhedral d.c.\ optimization problems. This result is used to show that, whenever the existence of an optimal solution can be certified, polyhedral d.c.\ optimization problems can be solved by certain concave minimization algorithms. No further assumptions are necessary in case of the first component being polyhedral and just some mild assumptions to the first component are required for the case where the second component is polyhedral. In case of both component functions being polyhedral, we obtain a primal and dual existence test and a primal and dual solution procedure. Numerical examples are discussed.
	
\medskip
\noindent
{\bf Keywords:} global optimization, d.c.~programming, multi-objective linear programming, linear vector optimization
\medskip

\noindent
{\bf MSC 2010 Classification:} 90C26, 90C29, 52B55
\end{abstract}

%%%%%%%%%%%%%%%%%%%%%%
\section{Introduction}

D.c.\ programming and concave minimization are known to be closely related problems, see e.g. \cite{Tuy87}. Theory and methods for concave minimization are surveyed, for instance, in \cite{Benson95}. An overview about the field of d.c.\ programming is given, for example, in \cite{HT99,Tuy95}. 

Let $g \colon \R^n \to \R\cup\{\infty\}$ and $h \colon \R^n \to \R\cup\{\infty\}$ be convex functions, where one of the functions $g$ or $h$ is assumed to be {\em polyhedral}, i.e., the epigraph of the respective function is a convex polyhedron. We consider the {\em polyhedral d.c.\ optimization problem}
\begin{equation}
\min_{x\in \dom g} [g(x)-h(x)].
\label{P}
\tag{DC}
\end{equation}
This problem will be transformed into a concave minimization problem under linear constraints. The general form of such a problem is as follows. For a concave function $f:\R^k \to \R\cup \{-\infty\}$ we consider
\begin{equation} \label{ConcMinGen}
	\min_y f(y) \quad\text{ s.t. }\quad y \in P,
\end{equation}
where the feasible set $P$ is an arbitrary polyhedron. For the reformulation of \eqref{P} we choose the concave objective function
% \begin{equation}\label{obj}
% 	f(x,r)\colonequals \begin{cases} r-h(x) & \text{ if } (x,r)\in \dom g \\ -\infty & \text{ otherwise} \end{cases}.
% \end{equation}
\begin{equation}\label{obj}
	f(x,r) \colonequals r-h(x).
\end{equation}
The feasible set $P$ is the epigraph of $g$. The concave minimization problem associated to \eqref{P}
\begin{equation}\label{ConcMinDC}\tag{ConcMin}
	\min_{x,r} f(x,r) \quad \text{ s.t. } \quad (x,r) \in \epi g
\end{equation}
is equivalent to \eqref{P} in the following sense:
\begin{itemize}
	\item If $(x^0,r^0)$ solves \eqref{ConcMinDC} then $x^0$ solves \eqref{P}.
	\item If $x^0$ solves \eqref{P} then $(x^0,g(x^0))$ solves \eqref{ConcMinDC}.
\end{itemize}
%Note that the same is true when the function $f$ defined in \eqref{obj} is replaced by the simpler function $\tilde f (x,r) \colonequals r - h(x)$. However, defining $f$ as in \eqref{obj} is indispensable for some of the results of this article.

In the literature on concave minimization, many authors assume a compact feasible set in order to guaranty the existence of optimal solutions, see e.g. \cite{comp_feas_set_3,comp_feas_set_1,comp_feas_set_2}. However, problem \eqref{ConcMinDC} always has a non-compact feasible set. In \cite{CirLoeWei18}, algorithms for concave (even quasi-concave) minimization based on a modification of methods for vector linear programming (VLP) are studied. An implementation of these methods based on the VLP solver {\em bensolve} \cite{bensolve_paper,bensolve} is provided by the Octave/Matlab package {\em bensolve tools} \cite{bt_paper,bt}. This approach allows non-compact feasible sets but requires certain other assumptions.

While the reformulation of \eqref{P} as \eqref{ConcMinDC} is straightforward, our research focuses on the assumptions which are required for the solution methods. It turns out that verifying or disproving the existence of optimal solutions of \eqref{P} is the crucial task here. For the case where $g$ is polyhedral, we prove that whenever an optimal solution of \eqref{P} exists, it can be computed by solving the associated problem \eqref{ConcMinDC} using the methods of \cite{CirLoeWei18}. In the case where $h$ is polyhedral, the same applies to the dual problem of \eqref{P}. Under mild assumptions, an optimal solution of \eqref{P} can be obtained from an optimal solution of the dual problem.

This article is organized as follows. In Section 2 we characterize the existence of optimal solutions for polyhedral d.c.\ programs.  Section 3 explains how a polyhedral d.c.\ program can be solved using a (quasi-)concave minimization solver like {\em bensolve tools} after the existence of optimal solutions has been verified. The last section presents two numerical examples for the case where both $g$ and $h$ are polyhedral. Both  existence tests and solution methods are addressed.

We use the following notation.
The {\em domain} of a function $f: \R^n \to \R\cup\{\infty\}$ is defined by $\dom f \colonequals \{x \in \R^n \mid f(x) < \infty\}$ and the epigraph of $f$ is the set $\epi f \colonequals \{(x,r) \in \R^n\times \R \mid r \geq f(x)\}$.
A convex function $f$ is called {\em closed} if $\epi f$ is a closed set.
%The {\em indicator function} of a set $C$ has the value zero for arguments in $C$ and the value $+\infty$ otherwise.
We write $\R^n_+$ for the set of vectors with non-negative components.
The {\em recession cone} $0^+C$ of a convex set $C \subseteq \R^n$ is the set of all $y$ with $C + \{y\} \subseteq C$.
The {\em lineality space} of $C$ is the set $\lineal(C)\colonequals 0^+C\cap(-0^+C)$.

\section{Existence of global optimal solutions}

In this section we discuss the existence of optimal solutions of problem \eqref{P} for the following three cases: (1) $g$ being polyhedral, (2) $h$ being polyhedral, (3) both $g$ and $h$ being polyhedral.

\subsection{The case of $g$ being polyhedral}

The following characterization of the existence of optimal solutions is the main result of this article.

\begin{theorem}\label{th1}
Problem \eqref{P} with $g$ being polyhedral has an optimal solution if and only if the following three properties hold:
\begin{enumerate}[(i)]
\item $\dom g \neq \emptyset$,
\item $\dom g \subseteq \dom h$,
\item $0^+\epi g \subseteq 0^{+}(\epi h \cap (\dom g \times \mathbb{R}))$.
\end{enumerate}
\end{theorem}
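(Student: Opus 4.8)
The plan is to analyse the equivalent concave program \eqref{ConcMinDC}, i.e.\ to minimise the concave function $f(x,r)=r-h(x)$ over the polyhedron $\epi g$, using that $\inf_{(x,r)\in\epi g}f(x,r)=\inf_{x\in\dom g}[g(x)-h(x)]$ and that a minimiser $(x^*,r^*)$ of the former yields a minimiser $x^*$ of \eqref{P} (and conversely). Throughout I would read membership in a recession cone straight from the definition $0^+C=\{y\mid C+\{y\}\subseteq C\}$: for $(d,s)\in 0^+(\epi h\cap(\dom g\times\R))$ and any $x\in\dom g$ one has $(x,h(x))\in\epi h\cap(\dom g\times\R)$, hence $h(x+d)\le h(x)+s$ and $x+d\in\dom g$. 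This pointwise inequality, obtained without any closedness assumption on $h$, is the workhorse of both implications.

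For sufficiency, assume (i)--(iii). Since $g$ is polyhedral, $\epi g$ is a polyhedron, so by Minkowski--Weyl it decomposes as $\epi g=Q+0^+\epi g$ with $Q=\conv\{w_1,\dots,w_m\}$ a polytope. By (ii) every $w_i=(x_i,r_i)$ has $x_i\in\dom g\subseteq\dom h$, so $f(w_i)=r_i-h(x_i)$ is finite. The key step is that $f$ does not decrease along recession directions: for $(x,r)\in\epi g$ and $(d,s)\in 0^+\epi g\subseteq 0^+(\epi h\cap(\dom g\times\R))$, using (iii), the inequality above gives $f((x,r)+(d,s))=(r+s)-h(x+d)\ge (r+s)-(h(x)+s)=f(x,r)$. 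Consequently $\inf_{\epi g}f=\inf_{Q}f$, and concavity of $f$ on the polytope $Q$ finishes the argument with no continuity needed: for $q=\sum_i\lambda_i w_i$ with $\lambda_i\ge 0$, $\sum_i\lambda_i=1$, one has $f(q)\ge\sum_i\lambda_i f(w_i)\ge\min_i f(w_i)$, so the finite value $\min_i f(w_i)$ is attained at a vertex $w_{i^*}\in\epi g$, which is therefore a global minimiser; reading off its $x$-component gives an optimal solution of \eqref{P}.

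For necessity, suppose \eqref{P} has an optimal solution, with finite optimal value $v$. Property (i) is immediate, and (ii) follows because any $x\in\dom g\setminus\dom h$ would give $g(x)-h(x)=-\infty<v$. For (iii) I argue by contraposition: if some $(d,s)\in 0^+\epi g$ fails to lie in $0^+(\epi h\cap(\dom g\times\R))$, then, because $d\in 0^+\dom g$ and (ii) holds, the failure must occur in the $\epi h$-component, i.e.\ there is $x_1\in\dom g$ with $h(x_1+d)>h(x_1)+s$. The core computation is to turn this single violation into unboundedness: exploiting that the difference quotient $t\mapsto\tfrac{1}{t}\big(h(x_1+td)-h(x_1)\big)$ is nondecreasing for the convex function $h$, I get $h(x_1+td)\ge h(x_1)+ct$ with $c=h(x_1+d)-h(x_1)>s$ for all $t\ge 1$, while $(d,s)\in 0^+\epi g$ yields $g(x_1+td)\le g(x_1)+st$. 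Hence $g(x_1+td)-h(x_1+td)\le [g(x_1)-h(x_1)]+(s-c)t\to-\infty$ along the feasible ray $x_1+td\in\dom g$, contradicting the finiteness of $v$.

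The step I expect to be the main obstacle is the recession-cone bookkeeping in (iii), specifically the equivalence between the inclusion $0^+\epi g\subseteq 0^+(\epi h\cap(\dom g\times\R))$ and the asymptotic slope domination ``$g0^+(d)\ge h0^+(d)$ for all $d\in 0^+\dom g$'', and carrying this out cleanly when $\epi h$ (and hence the intersection $\epi h\cap(\dom g\times\R)$) need not be closed. The device that avoids the usual closedness hypotheses is to read every recession-cone membership back to the pointwise inequality $h(x+d)\le h(x)+s$ at a concrete base point --- the optimal point for sufficiency, and the violating point $x_1$ for necessity --- and to rely only on monotonicity of convex difference quotients rather than on limiting recession functions.
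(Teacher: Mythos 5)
Your proof is correct and takes essentially the same route as the paper's: necessity by turning a violating recession direction $(d,s)\in 0^+\epi g$ into a feasible ray along which $g-h\to-\infty$ (your monotone-difference-quotient bound is exactly the paper's concavity-of-$f$ argument for \eqref{ConcMinDC}, and your observation that $d\in 0^+\dom g$ simply dispatches the paper's Case 1 up front), and sufficiency via the same Minkowski--Weyl decomposition $\epi g = Q + 0^+\epi g$, with (ii) giving finiteness of $f$ on $Q$, (iii) giving $f(q+w)\geq f(q)$ along recession directions, and concavity placing the minimum at a generator of $Q$. The closing worry about an equivalence with asymptotic slope domination is moot, since your argument never needs it.
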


\begin{proof}
Let \eqref{P} have an optimal solution $x^0$. Since $x^0 \in \dom g$, (i) is satisfied. Let $x \in \dom g$, then 
$-h(x) \geq g(x^0)-h(x^0)-g(x) > -\infty$ and hence $x \in \dom h$, i.e., (ii) holds. Assume (iii) is violated, that is,
$0^+\epi g \nsubseteq 0^+(\epi h \cap (\dom g \times \R))$. Then we can choose $(d,s) \in 0^+ \epi g\setminus 0^{+}(\epi h \cap (\dom g \times \mathbb{R}))$. By the definition of the recession cone, there exists some $(x,t) \in \epi h\cap (\dom g \times \mathbb R)$ such that $(x,t)+\alpha (d,s) \notin \epi h \cap (\dom g \times \R)$ for some $\alpha > 0$. Without loss of generality we can set $\alpha = 1$. Since $t \geq h(x)$ we get
$(x,h(x))+ (d,s) \notin \epi h \cap (\dom g \times \mathbb{R})$. We find some $r \in \R$ such that $(x,r) \in \epi g$.

Case 1: If $(x,h(x))+(d,s) \notin \dom g \times \mathbb{R}$, then $(x,r)+ (d,s) \notin \dom g \times \mathbb{R}$ and hence $(d,s) \not\in 0^+\epi g$, a contradiction.

Case 2: If $(x,h(x))+ (d,s) \notin \epi h$, then 
\begin{equation}\label{eq1}
	h(x)+s < h(x+d)
\end{equation}
by definition of the epigraph. 
We consider problem \eqref{ConcMinDC} which is equivalent to \eqref{P} as discussed above. For its objective function $f$ defined in \eqref{obj} we obtain
\begin{equation}\label{eq2}
	f(x+d,r+s) < f(x,r).
\end{equation}
For any $n \in \N$, $(x+nd,r+ns)$ is feasible for \eqref{ConcMinDC}. But $f(x+nd,r+ns)$ tends to $-\infty$, by concavity of $f$ and \eqref{eq2}. This contradicts the assumption that \eqref{P} has an optimal solution. Thus, (iii) is satisfied.

Assume now that (i), (ii) and (iii) hold. By (i), $\epi g$ is non-empty. Let $(x,r) \in \epi g$ and $(d,s) \in 0^+ \epi g$. Then $(x,r)+\alpha (d,s) \in \epi g$ for all $\alpha > 0$. By (ii), we have $(x,h(x)) \in \epi h \cap (\dom g \times \mathbb{R})$. By (iii) we obtain $(d,s) \in 0^{+}(\epi h \cap (\dom g \times \mathbb{R}))$. Thus, $(x,h(x))+\alpha (d,s) \in \epi h \cap (\dom g \times \mathbb{R})$ for all $\alpha > 0$. From the definition of the epigraph, we obtain $h(x) + \alpha s \geq h(x+\alpha d)$. For the objective function $f$ of problem \eqref{ConcMinDC}, this implies
\begin{equation}\label{eq3}
f(x+\alpha d, r+\alpha s) = r+\alpha s -h(x+\alpha d) \geq r-h(x) = f(x,r).
\end{equation}
Since $\epi g$ is a polyhedron, it can be expressed by a polytope $Q$ as $\epi g = Q + 0^+ \epi g$. By (ii), the values of $f$ are finite over $Q$. The polytope $Q$ is the convex hull of its vertices. Concavity of $f$ implies that $f$ attains its minimum at a vertex $(x^0,r^0)$ of $Q$. 
Since \eqref{eq3} holds for every $(x,r) \in Q$ and every $(d,s) \in 0^+ \epi g$, $(x^0,r^0)$ is an optimal solution of \eqref{ConcMinDC} and hence $x^0$ is an optimal solution of \eqref{P}.
\end{proof}

If $g$ is not polyhedral, the conditions (i), (ii) and (iii) in Theorem \ref{th1} are still necessary for the existence of optimal solutions. This can be seen in the first part of the proof, where the assumption of $g$ being polyhedral was not used. However, the conditions are no longer sufficient, not even if $h$ is polyhedral, as the following simple example shows.

\begin{example} Let $g,h:\R \to \R\cup\{\infty\}$ be defined as
	\begin{equation*}
		g(x) \colonequals \left\{\begin{array}{cl}
			- \sqrt{x} & \text{ if } x \geq 0\\
			+\infty & \text{ otherwise } 
		\end{array}\right. \qquad \text{and}\qquad
		h(x) \colonequals \left\{\begin{array}{cl}
			0 & \text{ if } x \geq 0\\
			+\infty & \text{ otherwise } 
		\end{array}\right. .
	\end{equation*}
	Then \eqref{P} is unbounded and thus has no optimal solution. We have $\dom g = \dom h = \R_+$ and $0^+ \epi g = 0^+ \epi h = \R^2_+$. Thus, the conditions (i), (ii) and (iii) of Theorem \ref{th1} are satisfied.
\end{example}

An extension to the non-polyhedral case requires further assumptions as discussed in the following remark.

\begin{remark} The second part of the proof of Theorem \ref{th1} still works for non-polyhedral functions $g$ if $\epi g$ is of the special form $Q + 0^+ \epi g$ for some compact set $Q$ and if $h$ is assumed to be upper semicontinuous. Then, by the Weierstrass theorem, the minimum of the objective function $f$ of \eqref{ConcMinDC} is attained in $Q$.
\end{remark}

Under certain assumptions, condition (iii) in Theorem \ref{th1} can be simplified. We start with two propositions and formulate this result as a corollary of Theorem \ref{th1}.

\begin{proposition}\label{recc}
Let $A,B \subseteq \R^n$ be non-empty convex sets with $A \subseteq B$ and let $B$ be closed. Then $0^+A \subseteq 0^+B$.
\end{proposition}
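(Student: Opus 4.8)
The plan is to unwind the definition of the recession cone used in the paper, namely $y \in 0^+C \iff C + \{y\} \subseteq C$, and to produce, for an arbitrary $y \in 0^+A$ and an arbitrary $b \in B$, the single membership $b + y \in B$. Since $B$ is closed, I would obtain this membership as the limit of a sequence of points of $B$, which is precisely where closedness enters as the crucial hypothesis.

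First I would fix $y \in 0^+A$ and iterate the defining inclusion $A + \{y\} \subseteq A$ to obtain $A + \{ny\} \subseteq A$ for every $n \in \N$. Picking any $a_0 \in A$ (possible since $A \neq \emptyset$), this yields a sequence $z_n \colonequals a_0 + ny \in A \subseteq B$. Only integer multiples are needed here, so no appeal to convexity of $A$ is required at this step.

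Next, I would fix an arbitrary $b \in B$ and form the convex combinations $w_n \colonequals (1 - \tfrac1n)\, b + \tfrac1n\, z_n$ for $n \geq 1$. Since both $b$ and $z_n$ lie in the convex set $B$, each $w_n \in B$. A direct computation gives $w_n = b + y + \tfrac1n (a_0 - b)$, so $w_n \to b + y$ as $n \to \infty$. Because $B$ is closed, the limit satisfies $b + y \in B$. As $b \in B$ was arbitrary, $B + \{y\} \subseteq B$, i.e.\ $y \in 0^+B$; and as $y \in 0^+A$ was arbitrary, $0^+A \subseteq 0^+B$.

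The only genuinely delicate point is the passage to the limit, and it is exactly here that closedness of $B$ is indispensable: without it the inclusion can fail (for instance $A = \{(x,1) \st x \geq 0\}$ sits inside the convex set $B = \{(x,y) \st y > 0\} \cup \{(0,0)\}$, yet $(1,0) \in 0^+A \setminus 0^+B$). Convexity of $B$ is used only to keep the combinations $w_n$ inside $B$, and nonemptiness of $A$ only to select the base point $a_0$; convexity and closedness of $A$ play no role in this argument.
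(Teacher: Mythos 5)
Your proof is correct, but it takes a genuinely different route from the paper's. The paper settles the statement in two lines by citing \cite[Theorem 8.3]{Rockafellar72}: for a non-empty \emph{closed} convex set $B$, $d \in 0^+B$ already follows if $x+\alpha d \in B$ for all $\alpha \geq 0$ holds at a \emph{single} point $x \in B$; one then takes any $x \in A \subseteq B$ and uses the ray $x+\alpha d \in A$ (where, incidentally, convexity of $A$ is tacitly used to upgrade the integer steps $x+nd \in A$ coming from the definition $A+\{d\}\subseteq A$ to arbitrary real $\alpha \geq 0$). You instead inline the substance of that cited theorem: your convex-combination trick
\begin{equation*}
w_n = \Bigl(1-\tfrac1n\Bigr) b + \tfrac1n\, (a_0+ny) = b + y + \tfrac1n\,(a_0-b) \;\longrightarrow\; b+y,
\end{equation*}
carried out for every $b \in B$ and closed with a limit argument, is essentially the standard proof of the relevant direction of Rockafellar's Theorem 8.3, so nothing external is needed. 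Your version buys self-containedness and a sharper accounting of hypotheses: as you observe, convexity and closedness of $A$ are never used, only $A \neq \emptyset$ and the defining inclusion $A+\{y\}\subseteq A$ (indeed only integer multiples of $y$), so the proposition holds for arbitrary non-empty sets $A$ contained in a closed convex $B$; your counterexample with $B = \{(x,y) \mid y>0\}\cup\{(0,0)\}$ correctly pinpoints closedness of $B$ as indispensable. What the paper's version buys is brevity and delegation to a standard reference. Both arguments are sound, and all your computations check out.
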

\begin{proof}
This follows from \cite[Theorem 8.3]{Rockafellar72}, which states that for a non-empty closed convex set $B$, $d \in 0^+B$ if and only if there is some $x \in B$ satisfying $x+\alpha d \in B$ for all $\alpha \geq 0$. Let $d \in 0^+A$. By definition of the recession cone, we have $x+\alpha d \in A$ for all $x \in A$. Since $A \subseteq B$, we get $d \in 0^+B$.
\end{proof}

\begin{proposition}\label{p2}
Let $0^+\epi h=0^+\cl \epi h$. Then condition (iii) in Theorem \ref{th1} is equivalent to 
\begin{enumerate}[(iii')]
	\item $0^+ \epi g \subseteq 0^+ \epi h$.
\end{enumerate}

\end{proposition}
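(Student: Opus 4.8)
The plan is to prove the two inclusions $\text{(iii)}\Rightarrow\text{(iii')}$ and $\text{(iii')}\Rightarrow\text{(iii)}$ separately, with the expectation that only the first of these genuinely consumes the hypothesis $0^+\epi h = 0^+\cl\epi h$. Throughout I abbreviate $D \colonequals \dom g \times \R$. Since the conditions are those of Theorem~\ref{th1}, I work under (i) and (ii), which guarantee $\epi h \cap D \neq \emptyset$ (indeed $\dom g \subseteq \dom g \cap \dom h$ and $\dom g \neq\emptyset$); this nonemptiness is what lets me invoke Proposition~\ref{recc}, and it cannot be dropped, as disjoint-domain examples show the bare equivalence can fail otherwise.

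First I would record two elementary facts that follow directly from the definition $0^+C = \{y \mid C + \{y\}\subseteq C\}$ and need no closedness. \textbf{(a)} For arbitrary convex sets, $0^+\epi h \cap 0^+D \subseteq 0^+(\epi h \cap D)$: a direction translating both $\epi h$ and $D$ into themselves also translates their intersection into itself. \textbf{(b)} $0^+\epi g \subseteq 0^+D$: every $x\in\dom g$ yields $(x,g(x))\in\epi g$, so a recession direction $(d,s)$ of $\epi g$ forces $x+d\in\dom g$, whence $\dom g + \{d\}\subseteq\dom g$ and $(\dom g\times\R)+\{(d,s)\}\subseteq \dom g\times\R$. (Alternatively, (b) is immediate from Proposition~\ref{recc} applied to $\epi g \subseteq D$, since $D$ is closed.)

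For $\text{(iii')}\Rightarrow\text{(iii)}$, assume $0^+\epi g \subseteq 0^+\epi h$. Together with (b) this gives $0^+\epi g \subseteq 0^+\epi h \cap 0^+D$, and then (a) yields $0^+\epi g \subseteq 0^+(\epi h\cap D)$, which is exactly (iii). This direction uses neither closedness nor the hypothesis relating $\epi h$ and $\cl\epi h$.

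The converse $\text{(iii)}\Rightarrow\text{(iii')}$ is where the main obstacle lies: because $\epi h$ need not be closed, Proposition~\ref{recc} is \emph{not} applicable with $B=\epi h$, so one cannot directly pass from $0^+(\epi h\cap D)$ to $0^+\epi h$. I would route through the closure instead. Assuming (iii), note $\epi h \cap D \subseteq \cl\epi h$ with $\cl\epi h$ a nonempty closed convex set, so Proposition~\ref{recc} gives $0^+(\epi h\cap D)\subseteq 0^+\cl\epi h$; the hypothesis $0^+\epi h = 0^+\cl\epi h$ then replaces the right-hand side by $0^+\epi h$. Chaining, $0^+\epi g \subseteq 0^+(\epi h\cap D) \subseteq 0^+\cl\epi h = 0^+\epi h$, i.e.\ (iii'). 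The only delicate point is to invoke the hypothesis at precisely the step where closedness of $\epi h$ would otherwise be needed, and to have ensured the relevant sets are nonempty so that Proposition~\ref{recc} applies.
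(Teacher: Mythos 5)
Your proof is correct and takes essentially the same route as the paper's: the implication (iii) $\Rightarrow$ (iii') via $\epi h \cap (\dom g \times \R) \subseteq \cl\epi h$, Proposition~\ref{recc}, and the hypothesis $0^+\epi h = 0^+\cl\epi h$ is identical to the paper's chain, and your facts (a) and (b) merely repackage the paper's elementwise verification of the converse, including the same trick of replacing $(x,r)$ by $(x,g(x))$ to see that recession directions of $\epi g$ recede $\dom g \times \R$. Your explicit attention to nonemptiness (needed for Proposition~\ref{recc}) is a small point the paper glosses over; only your parenthetical alternative justification of (b) via Proposition~\ref{recc} is shaky, since $\dom g \times \R$ need not be closed for general convex $g$, but your primary elementary argument for (b) does not rely on it.
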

\begin{proof}
Let (iii) be satisfied. Then
\begin{equation*}
0^+\epi g \overset{\text{(iii)}}{\subseteq} 0^{+}(\underbrace{\epi h \cap (\dom g \times \mathbb{R})}_{\subseteq \cl \epi h}) \overset{\text{Prop.\text{ }\ref{recc}}}{\subseteq} 0^+\cl\epi h = 0^+\epi h.
\end{equation*}

Let (iii') be satisfied. Let $(d,s) \in 0^+\epi g$ and  $(x,r) \in \epi h \cap (\dom g \times \mathbb R)$. By (iii'), we have $(x,r)+\alpha (d,s) \in \epi h$ for all $\alpha \geq 0$. Assuming that $(x,r)+\alpha (d,s) \notin \dom g \times \mathbb R$, we get $(x,g(x))+\alpha (d,s) \notin \dom g \times \mathbb R$, which contradicts the precondition $(d,s) \in 0^+\epi g$. Consequently, $(x,r)+\alpha (d,s) \in \epi h \cap (\dom g \times \mathbb R)$ for all $\alpha \geq 0$.
\end{proof}

\begin{corollary}\label{cor1}
Problem \eqref{P} with $g$ being polyhedral and $h$ being closed has an optimal solution if and only if the following properties hold:
\begin{enumerate}[ (i)]
\item $\dom g \neq \emptyset$,
\item $\dom g \subseteq \dom h$,
\item [(iii')] $0^+\epi g \subseteq 0^+ \epi h$.
\end{enumerate}	
\end{corollary}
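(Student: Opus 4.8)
The plan is to obtain this statement as an immediate consequence of Theorem \ref{th1} and Proposition \ref{p2}, so essentially all of the work has already been done. Theorem \ref{th1} already gives the desired equivalence with conditions (i), (ii), and the original (iii), and Proposition \ref{p2} tells us precisely when (iii) may be replaced by the simpler (iii'): namely whenever $0^+\epi h = 0^+\cl\epi h$. Thus the entire task reduces to verifying this single hypothesis under the extra assumption that $h$ is closed.

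First I would unwind the definition of closedness supplied in the notation paragraph: a convex function $h$ is closed exactly when $\epi h$ is a closed set. Hence $\cl\epi h = \epi h$, and consequently $0^+\epi h = 0^+\cl\epi h$ holds trivially. This is the one and only verification needed, and it is where the hypothesis ``$h$ closed'' enters; no other property of $h$ is used beyond what Theorem \ref{th1} already assumes.

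With the hypothesis of Proposition \ref{p2} in hand, I would invoke that proposition to conclude that condition (iii) of Theorem \ref{th1} is equivalent to condition (iii'). Substituting this equivalence into the statement of Theorem \ref{th1} then yields that problem \eqref{P}, with $g$ polyhedral, has an optimal solution if and only if (i), (ii), and (iii') all hold, which is exactly the claim.

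I do not anticipate any real obstacle here, since the corollary is purely a matter of chaining two already-established results. The only point deserving a moment's care is making explicit that ``$h$ closed'' is the precise hypothesis that supplies $\cl\epi h = \epi h$ and hence activates Proposition \ref{p2}; once that observation is recorded, the conclusion is immediate and the proof can be stated in a single short line.
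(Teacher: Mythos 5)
Your proposal is correct and follows exactly the paper's own proof: observe that $h$ closed gives $\epi h = \cl\epi h$, hence $0^+\epi h = 0^+\cl\epi h$, and then chain Proposition \ref{p2} with Theorem \ref{th1}. Nothing is missing; your only addition is to spell out where the closedness hypothesis enters, which the paper states equally briefly.
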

\begin{proof} If $h$ is closed, we have $\epi h = \cl\epi h$ and hence $0^+\epi h = 0^+\cl\epi h$. Thus, the result follows from Proposition \ref{p2} and Theorem \ref{th1}.	
\end{proof}

The following example shows that condition (iii') is not adequate if $h$ is not assumed to be closed.

\begin{example}
Consider problem \eqref{P} for the functions $g, h:\R^2 \to \R \cup \{\infty\}$ with
\begin{equation*}
g(x,y)=\begin{cases} |x| & \text{if }y \in [1,\infty) \\ \infty & \text{otherwise}\end{cases}
\quad \text{ and } \quad
h(x,y)=\begin{cases} |x| & \text{if }y \in (0,\infty) \\ 2|x| & \text{if }y=0 \\ \infty & \text{otherwise}\end{cases}.
\end{equation*}
Both $g$ and $h$ are convex and $g$ is polyhedral.
Both functions coincide on $\dom g=\R \times [1,\infty)$, whence \eqref{P} has optimal solutions of the form $(0,y)$ for $y \geq 1$. The recession cones of the functions are
\begin{equation*} 
0^+\epi g=\cone\left\{\left(\begin{array}{r} -1 \\ 0 \\ 1 \end{array}\right),\left(\begin{array}{c} 1 \\ 0 \\ 1 \end{array}\right),\left(\begin{array}{c} 0 \\ 1 \\ 0 \end{array}\right)\right\}
\end{equation*}
and
\begin{equation*}
0^+\epi h=\cone\left\{\left(\begin{array}{r} -1 \\ 0 \\ 2 \end{array}\right),\left(\begin{array}{c} 1 \\ 0 \\ 2 \end{array}\right),\left(\begin{array}{c} 0 \\ 1 \\ 0 \end{array}\right)\right\}.
\end{equation*}
We see that $(1,0,1) \in 0^+\epi g \setminus 0^+\epi h$, i.e., (iii') is violated.
\end{example}

%We now turn to the case where the function $h$ in problem
% \eqref{P} is polyhedral.
\subsection{The case of $h$ being polyhedral}

We consider the Toland-Singer dual problem  of \eqref{P}, see \cite{Sin79, Tol78}, that is,
\begin{equation}\label{D}\tag{DC*}
	\min_{y \in \dom h^*} [h^*(y) - g^*(y)],
\end{equation}
where $g^*(y) \colonequals \sup_{x \in \R^n} [{y}^Tx - g(x)]$ is the conjugate of $g$ and likewise for $h$.
The duality theory by Toland and Singer states that the optimal objective values of \eqref{P} and \eqref{D}
 coincide under the assumption of $h$ being closed. 

Since $g^*$ is convex and $h^*$ is polyhedral convex, the existence result of Theorem \ref{th1} applies to problem \eqref{D}. The following result provides the relation between optimal solutions of \eqref{P} and \eqref{D}. We denote by $\partial f(x)\colonequals \{y \in \R^n \mid \forall z \in \R^n: f(z) \geq f(x) + {y}^T(z-x)\}$ the {\em subdifferential} of a convex function $f:\R^n \to \R \cup \{ \infty\}$ at $x \in \dom f$. We set $\partial f(x) \colonequals \emptyset$ for $x \not\in\dom f$. 

\begin{proposition}{(e.g. \cite[Proposition 4.7]{HT99} or \cite[Proposition 3.20]{Tuy98})} \label{p_DC}
	Let $g:\R^n \to \R\cup\{\infty\}$ and $h:\R^n \to \R\cup\{\infty\}$ be convex functions with non-empty domain. Then:
	\begin{enumerate}[(i)]
	\item If $x$ is an optimal solution of \eqref{P}, then each $y \in \partial h(x)$ is an optimal solution of \eqref{D}.
	\end{enumerate}	
	If, in addition, $g$ and $h$ are closed, a dual statement holds:
	\begin{enumerate}[(i)]
	\item[(ii)] If $y$ is an optimal solution of \eqref{D}, then each $x \in \partial g^*(y)$ is an optimal solution of \eqref{P}.
	\end{enumerate}
\end{proposition}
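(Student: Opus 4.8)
The plan is to prove Proposition~\ref{p_DC} by working directly with the definitions of the subdifferential and the conjugate functions, exploiting the Fenchel-Young inequality and its equality case. The central algebraic fact I would rely on is the following characterization: for a convex function $f$ with $x \in \dom f$ and $y \in \R^n$, one has $y \in \partial f(x)$ if and only if $f(x) + f^*(y) = y^Tx$. This equality case of the Fenchel-Young inequality (which always reads $f(x) + f^*(y) \geq y^Tx$) is what links a subgradient of a function to its conjugate, and it will be the workhorse for both directions.

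For part (i), I would start from an optimal $x$ of \eqref{P} and a subgradient $y \in \partial h(x)$. The goal is to show $h^*(y) - g^*(y) \leq h^*(y') - g^*(y')$ for every $y' \in \dom h^*$. First I would use the equality case to write $h^*(y) = y^Tx - h(x)$, so that the dual objective at $y$ becomes $h^*(y) - g^*(y) = y^Tx - h(x) - g^*(y)$. Next I would bound $g^*(y)$ from below using its definition as a supremum, namely $g^*(y) \geq y^Tx - g(x)$, which upon substitution yields $h^*(y) - g^*(y) \leq g(x) - h(x)$, the primal objective value at $x$. The remaining task is to show this quantity is a lower bound for the dual objective at an arbitrary $y'$; here I would invoke weak duality of the Toland-Singer scheme, which follows from the Fenchel-Young inequalities $h^*(y') \geq (y')^Tx - h(x)$ and $g^*(y') \geq (y')^Tx - g(x)$ applied at the primal point $x$, giving $h^*(y') - g^*(y') \leq \text{(something)}$ in the wrong direction, so I would instead derive the generic inequality $\inf \eqref{D} \geq \inf \eqref{P}$ by pairing conjugate bounds at a common point and then combine it with the chain above to close the argument.

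For part (ii), which assumes $g$ and $h$ closed, the strategy is symmetric: starting from an optimal $y$ of \eqref{D} and $x \in \partial g^*(y)$, I would use closedness to guarantee biconjugation, $g^{**} = g$ and $h^{**} = h$, so that $x \in \partial g^*(y)$ is equivalent to $y \in \partial g(x)$ by the same equality-case characterization applied to $g^*$ and its conjugate $g$. This reflexivity under closedness is precisely why the additional hypothesis is needed and why no analogous assumption appears in part (i). I would then run the argument of part (i) in the dual variables, swapping the roles of $g \leftrightarrow h^*$ and $h \leftrightarrow g^*$.

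The step I expect to be the main obstacle is establishing the correct direction of weak duality for the Toland-Singer pair and ensuring the finiteness of all conjugate values involved, so that no $\infty - \infty$ situations arise; in particular I must verify that $y \in \partial h(x)$ forces $y \in \dom h^*$ and that $g^*(y)$ is finite, using $x \in \dom g$. Once the equality case of Fenchel-Young is in hand, each direction is a short computation, but marshalling the supremum bounds so that the primal and dual objective values are correctly sandwiched is the part that must be handled with care. Since the proposition is cited from \cite{HT99} and \cite{Tuy98}, I would most likely present this as a brief self-contained derivation rather than reproving the full Toland-Singer duality theorem from scratch.
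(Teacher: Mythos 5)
The paper does not actually prove Proposition \ref{p_DC}: it imports the statement from \cite[Proposition 4.7]{HT99} and \cite[Proposition 3.20]{Tuy98} (with Remark \ref{rem1} correcting the closedness hypotheses of those sources), so there is no in-paper argument to compare against. Your self-contained derivation is correct and is essentially the standard Toland--Singer argument. The one step you flagged as the main obstacle resolves cleanly: weak duality $\inf\eqref{D} \geq \inf\eqref{P}$ holds with no closedness assumption, because for every $y' \in \dom h^*$,
\begin{equation*}
h^*(y') - g^*(y') \;=\; \inf_{z \in \R^n}\bigl[\,g(z) - {y'}^T z + h^*(y')\,\bigr] \;\geq\; \inf_{z \in \R^n}\bigl[\,g(z)-h(z)\,\bigr],
\end{equation*}
where the equality unfolds $g^*(y')$ as a supremum and the inequality is Fenchel--Young for $h$ at the point $z$. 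Your first attempt went wrong because you applied both Fenchel--Young bounds at the single point $x$; the fix is to keep $g^*(y')$ expanded as an infimum over all $z$ and invoke Fenchel--Young only for $h$. Combined with your chain $h^*(y)-g^*(y) \leq g(x)-h(x)$ from the equality case at $y \in \partial h(x)$, this sandwiches the dual value at $y$ between $\inf\eqref{P}$ and itself, so $y$ is optimal. Your finiteness worries also dissolve: the paper's convention $\partial h(x) \colonequals \emptyset$ for $x \notin \dom h$ forces $h(x) \in \R$, hence $h^*(y) = y^T x - h(x) \in \R$ and $y \in \dom h^*$; moreover $g^*(y) \geq y^T x - g(x) > -\infty$, and $g^*(y) = +\infty$ would make the dual value $-\infty$, contradicting weak duality against the finite primal value $g(x)-h(x)$. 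For part (ii), your plan is exactly right and pinpoints where closedness enters, consistent with Remark \ref{rem1}: apply part (i) to \eqref{D} with the roles $g \leftrightarrow h^*$ and $h \leftrightarrow g^*$, and use $g^{**}=g$, $h^{**}=h$ to identify the dual of \eqref{D} with \eqref{P}. In fact you do not even need the separate equivalence $x \in \partial g^*(y) \Leftrightarrow y \in \partial g(x)$ (which is the paper's Proposition \ref{p5} in disguise), since part (i) applied to the pair $(h^*, g^*)$ already speaks of $x \in \partial g^*(y)$; the only hypothesis to verify is that $h^*$ and $g^*$ have non-empty domains, which holds because proper closed convex functions admit affine minorants.
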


\begin{remark} \label{rem1} 
As already mentioned in \cite{LoeWag17}, the assumption of $g$ and $h$ being closed for statement (ii) of Proposition \ref{p_DC} is missing in \cite[Proposition 4.7]{HT99}. In \cite[Proposition 3.20]{Tuy98} the assumption of $g$ being closed is required. Examples can be found in \cite{LoeWag17}.
\end{remark}

\begin{proposition}{(e.g. \cite[Theorem 23.5]{Rockafellar72})}\label{p5} Let $g:\R^n \to \R$ be a proper closed convex function. Then $x \in \partial g^*(y)$ if and only if $x$ is an optimal solution of 
	\begin{equation}
	\min\limits_{z \in\mathbb{R}^n}[g(z)-y^Tz].
	\label{p3}
	\end{equation}
\end{proposition}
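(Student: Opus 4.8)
The plan is to show that each of the two conditions in the claimed equivalence is in turn equivalent to the single Fenchel--Young equality $g(x)+g^*(y)=y^Tx$, and then to chain these equivalences together.

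First I would translate optimality for \eqref{p3} into this equality. By definition of the objective, $x$ solves \eqref{p3} if and only if $g(x)-y^Tx\le g(z)-y^Tz$ holds for every $z\in\R^n$, which rearranges to $y^Tx-g(x)\ge y^Tz-g(z)$ for all $z$, i.e. $y^Tx-g(x)=\sup_{z\in\R^n}[y^Tz-g(z)]=g^*(y)$. Hence $x$ solves \eqref{p3} if and only if $g(x)+g^*(y)=y^Tx$. This step only unwinds the definition of the conjugate $g^*$ and requires nothing beyond what makes $g^*$ well defined.

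Next I would relate $x\in\partial g^*(y)$ to the same equality. The membership $x\in\partial g^*(y)$ means $g^*(w)\ge g^*(y)+x^T(w-y)$ for all $w\in\R^n$, equivalently $x^Ty-g^*(y)\ge x^Tw-g^*(w)$ for all $w$, i.e.
\[
x^Ty-g^*(y)=\sup_{w\in\R^n}[x^Tw-g^*(w)]=g^{**}(x).
\]
Here I would invoke the Fenchel--Moreau biconjugation theorem: since $g$ is proper, closed and convex, $g^{**}=g$, so the right-hand side equals $g(x)$ and we again obtain $g(x)+g^*(y)=y^Tx$. For the converse, if this equality holds then for every $w$ the conjugate inequality $g^*(w)\ge x^Tw-g(x)$, combined with $g(x)=y^Tx-g^*(y)$, gives $g^*(w)\ge g^*(y)+x^T(w-y)$, that is $x\in\partial g^*(y)$; this direction needs only the definition of $g^*$. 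Combining the two equivalences proves the proposition.

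The step I expect to be the crux is the forward direction of the second equivalence, where the identity $g^{**}=g$ is indispensable; this is precisely where the hypothesis that $g$ is closed enters, since for a non-closed $g$ one only has $g^{**}=\cl g$ and the characterization may fail. The remaining manipulations are routine rewritings of the definitions of $g^*$ and of the subdifferential.
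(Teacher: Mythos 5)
Your proof is correct: the paper gives no proof of its own here, citing \cite[Theorem 23.5]{Rockafellar72}, and your argument --- reducing both conditions to the Fenchel--Young equality $g(x)+g^*(y)=y^Tx$ and invoking $g^{**}=g$ for the closed proper convex $g$ --- is exactly the chain of equivalences by which that cited theorem is proved. You also correctly identify biconjugation as the one place where closedness of $g$ is essential, which matches why the paper imposes that hypothesis.
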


\begin{theorem}\label{th2} Let $g$ be closed and let \eqref{p3} have an optimal solution for every $y \in \dom g^*$. Let $h$ be polyhedral.
	Then, problem
 \eqref{P} has an optimal solution if and only if the following properties hold:
	\begin{enumerate}[(i*)]
	\item $\dom h^* \neq \emptyset$,
	\item $\dom h^* \subseteq \dom g^*$,
	\item $0^+\epi h^* \subseteq 0^+ \epi g^*$.
	\end{enumerate}
\end{theorem}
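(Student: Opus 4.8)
The plan is to recognize that the three conditions (i*), (ii*) and (iii*) are nothing but the hypotheses of Corollary~\ref{cor1} applied to the dual problem \eqref{D}, and then to transfer the existence of optimal solutions between \eqref{P} and \eqref{D} via the duality statements in Proposition~\ref{p_DC}. Concretely, \eqref{D} is itself a polyhedral d.c.\ problem of the form \eqref{P}: its first component is $h^*$ and its second component is $g^*$. Since $h$ is polyhedral, $h^*$ is polyhedral as well, and since $g^*$ is a conjugate function it is automatically closed. Hence Corollary~\ref{cor1}, read with the pair $(g,h)$ replaced by $(h^*,g^*)$, applies to \eqref{D} and asserts that \eqref{D} has an optimal solution if and only if $\dom h^* \neq \emptyset$, $\dom h^* \subseteq \dom g^*$ and $0^+\epi h^* \subseteq 0^+\epi g^*$ hold, that is, if and only if (i*), (ii*) and (iii*) hold. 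This reduces the theorem to the claim that, under the stated assumptions, \eqref{P} has an optimal solution if and only if \eqref{D} does.

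For the forward implication I would take an optimal solution $x$ of \eqref{P}. As in the first part of the proof of Theorem~\ref{th1}, the optimal value is finite, so $x \in \dom g \cap \dom h$. Because $h$ is polyhedral, its subdifferential is non-empty at every point of its domain, so I may pick some $y \in \partial h(x)$. Proposition~\ref{p_DC}(i) then guarantees that $y$ is an optimal solution of \eqref{D}, so \eqref{D} is solvable and (i*), (ii*) and (iii*) follow from the reduction above.

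For the reverse implication I would start from (i*), (ii*) and (iii*), which by the reduction yield an optimal solution $y$ of \eqref{D}. Feasibility gives $y \in \dom h^*$, and (ii*) then gives $y \in \dom g^*$. At this point the extra hypothesis enters: by assumption \eqref{p3} has an optimal solution for this $y$, and by Proposition~\ref{p5} (applicable since $g$ is proper closed convex) this says exactly that $\partial g^*(y) \neq \emptyset$. As both $g$ and $h$ are closed ($h$ being polyhedral is in particular closed), Proposition~\ref{p_DC}(ii) applies and shows that any $x \in \partial g^*(y)$ is an optimal solution of \eqref{P}.

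The reduction to Corollary~\ref{cor1} is the conceptual heart of the argument, so the main obstacle is the bookkeeping around non-emptiness of subdifferentials rather than any deep estimate. In the forward direction I must ensure $\partial h(x) \neq \emptyset$, which polyhedrality of $h$ delivers; in the reverse direction I must ensure $\partial g^*(y) \neq \emptyset$ for the dual optimizer $y$, which is precisely what the technical hypothesis that \eqref{p3} be solvable for every $y \in \dom g^*$ is designed to secure through Proposition~\ref{p5}. I would be careful to confirm that this dual optimizer indeed lies in $\dom g^*$ (guaranteed by (ii*)) before invoking the hypothesis. Finally, I would verify the standing facts used implicitly, namely that $h$ polyhedral implies $h^*$ polyhedral and that a conjugate is always closed, since these are exactly what let Corollary~\ref{cor1} be applied to \eqref{D} with no extra assumption on $g^*$.
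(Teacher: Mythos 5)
Your proof is correct and follows essentially the same route as the paper: for the forward direction, non-emptiness of $\partial h(x)$ by polyhedrality of $h$ together with Proposition \ref{p_DC}(i), and for the reverse direction, the solvability hypothesis on \eqref{p3} together with Propositions \ref{p5} and \ref{p_DC}(ii), with the existence characterization for \eqref{D} supplying conditions (i*)--(iii*). Your one refinement---invoking Corollary \ref{cor1} for \eqref{D} after noting that $h^*$ is polyhedral and $g^*$ is closed as a conjugate---is in fact slightly more precise than the paper's direct appeal to Theorem \ref{th1}, whose condition (iii) applied to \eqref{D} would literally read $0^+\epi h^* \subseteq 0^{+}(\epi g^* \cap (\dom h^* \times \R))$ rather than the stated (iii*).
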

\begin{proof}
	Let \eqref{P} have an optimal solution $x$. Since $x \in \dom h$ and $h$ is polyhedral, there exists some $y \in \partial h(x)$, see e.g. \cite[Theorem 23.10]{Rockafellar72}. Proposition \ref{p_DC} states that $y$ is an optimal solution of \eqref{D}. Theorem \ref{th1} applied to \eqref{D} yields the conditions (i*), (ii*) and (iii*).

Let the conditions (i*), (ii*) and (iii*)
 be satisfied. By Theorem \ref{th1} we obtain that \eqref{D} has an optimal solution $y$. By assumption, \eqref{p3} has an optimal solution $x$, which belongs to $\partial g^*(y)$, by Proposition \ref{p5}. Since $g$ and $h$ are closed, Proposition \ref{p_DC} yields that $x$ is an optimal solution to \eqref{P}.
\end{proof}

%Finally we consider the case where both $g$ and $h$ are polyhedral.
\subsection{The case of both $g$ and $h$ being polyhedral}

Combining the previous results we obtain the following statement.

\begin{corollary}\label{cor48} Let $g \colon \R^n \to \R\cup\{\infty\}$ and $h \colon \R^n \to \R\cup\{\infty\}$ are polyhedral convex functions. Then, the following statements are equivalent:
	\begin{enumerate}[(a)]
		\item Problem \eqref{P} has an optimal solution.
		\item Problem \eqref{D} has an optimal solution.
		\item The conditions (i), (ii), (iii') in Corollary \ref{cor1} are satisfied.
		\item The conditions (i*), (ii*), (iii*) in Theorem \ref{th2} are satisfied.
	\end{enumerate}
\end{corollary}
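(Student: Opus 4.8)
The plan is to prove the four statements mutually equivalent by establishing three biconditionals that connect them: the primal test (a)$\Leftrightarrow$(c), the dual test (b)$\Leftrightarrow$(d), and the primal--dual link (a)$\Leftrightarrow$(b). First I would record the structural facts that make the earlier results applicable. Since $g$ and $h$ are polyhedral, their epigraphs are polyhedra and hence both functions are closed; moreover the conjugate of a polyhedral convex function is again polyhedral (and in particular closed), so $g^*$ and $h^*$ are polyhedral as well. These observations guarantee that all closedness and polyhedrality hypotheses invoked below are automatically satisfied.

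For (a)$\Leftrightarrow$(c) I would simply apply Corollary \ref{cor1} to \eqref{P}: $g$ is polyhedral and $h$ is closed, which are exactly the hypotheses of that corollary, and its characterizing conditions are (i), (ii), (iii'), i.e.\ precisely (c). For (b)$\Leftrightarrow$(d) I would apply Corollary \ref{cor1} to the dual problem \eqref{D}, read off the template $\min_{x\in\dom g}[g(x)-h(x)]$ with $g$ replaced by $h^*$ and $h$ replaced by $g^*$. By the structural facts, $h^*$ is polyhedral and $g^*$ is closed, so the corollary applies, and its three conditions translate to $\dom h^*\neq\emptyset$, $\dom h^*\subseteq\dom g^*$ and $0^+\epi h^*\subseteq 0^+\epi g^*$, which are verbatim (i*), (ii*), (iii*), i.e.\ (d). The one point I would check carefully here is that the role swap is legitimate and that the translated conditions line up exactly with those stated in Theorem \ref{th2}.

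For (a)$\Leftrightarrow$(b) I would use Proposition \ref{p_DC}. If $x$ solves \eqref{P}, then $x\in\dom h$ and, $h$ being polyhedral, $\partial h(x)\neq\emptyset$ (e.g.\ \cite[Theorem 23.10]{Rockafellar72}); by part (i), any $y\in\partial h(x)$ solves \eqref{D}, giving (b). Conversely, if $y$ solves \eqref{D}, then necessarily $g^*(y)<\infty$, for otherwise the objective $h^*(y)-g^*(y)$ would equal $-\infty$ and no optimal solution could exist; hence $y\in\dom g^*$ and, $g^*$ being polyhedral, $\partial g^*(y)\neq\emptyset$. Since $g$ and $h$ are closed, part (ii) yields that any $x\in\partial g^*(y)$ solves \eqref{P}, giving (a). Combining the three biconditionals establishes the equivalence of (a)--(d).

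The main obstacle is not a single deep step but the bookkeeping of hypotheses: I must confirm that polyhedrality forces closedness of $g,h$ and polyhedrality (hence closedness) of $g^*,h^*$, so that Corollary \ref{cor1} applies legitimately both to \eqref{P} and, with roles swapped, to \eqref{D}, and so that Proposition \ref{p_DC}(ii) is available. A secondary point requiring attention is the backward direction of (a)$\Leftrightarrow$(b), where I must rule out $g^*(y)=\infty$ at an optimal dual point before invoking subdifferentiability of $g^*$. Alternatively, (a)$\Leftrightarrow$(d) could be obtained directly from Theorem \ref{th2}, but that route additionally demands verifying its standing hypothesis that \eqref{p3} is solvable for every $y\in\dom g^*$; this holds because $z\mapsto g(z)-y^Tz$ is polyhedral and bounded below by $-g^*(y)>-\infty$, and a polyhedral convex function that is bounded below attains its minimum.
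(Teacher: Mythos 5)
Your proof is correct and follows essentially the same route as the paper: (a)$\Leftrightarrow$(c) via Corollary \ref{cor1}, (b)$\Leftrightarrow$(d) via the existence characterization applied to \eqref{D}, and (a)$\Leftrightarrow$(b) via Proposition \ref{p_DC} together with the nonemptiness of the subdifferential of a polyhedral function on its domain. The only cosmetic difference is that you obtain (b)$\Leftrightarrow$(d) by applying Corollary \ref{cor1} directly to \eqref{D} (with $h^*$ polyhedral and $g^*$ closed), whereas the paper cites Theorem \ref{th2} after verifying its solvability hypothesis for \eqref{p3} by the same polyhedral-attainment argument you give in your closing paragraph, so the content is identical.
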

\begin{proof} By Corollary \ref{cor1}, (a) is equivalent to (c). Since $g$ and $h$ are polyhedral, the assumptions of Theorem \ref{th2} are satisfied. Indeed, by $y \in \dom g^*$, problem \eqref{p3} has a finite optimal value and hence the minimum is attained as $g$ is polyhedral. Theorem \ref{th2} yields that (b) is equivalent to (d). Proposition \ref{p_DC} and the fact that the subdifferential of a polyhedral function is non-empty at points of the domain of the function, we see that (a) is equivalent to (b).
\end{proof}

\section{Solution procedure}

Let $g$ be polyhedral in problem \eqref{P}. We are going to solve \eqref{P} by the following procedure. First we check whether or not an optimal solution of \eqref{P} exists by using Theorem \ref{th1}. If so, we solve the associated problem \eqref{ConcMinDC} by the solution methods of \cite{CirLoeWei18}. By using Theorem \ref{th1} again, we will check the following assumptions which are required to be satisfied for the algorithms presented in \cite{CirLoeWei18}:

There exists a polyhedral convex pointed cone $C \subseteq \R^n$ such that
\begin{itemize}
	\item [(M)] $f$ is $C$-monotone (i.e.\ $y-x \in C$ implies $f(x)\leq f(y)$),
	\item [(B)] $P$ is $C$-bounded (i.e.\ $0^+ P \subseteq C$).
\end{itemize}
If (M) and (B) are satisfied for some polyhedral convex pointed cone $C \subseteq \R^n$, then \eqref{ConcMinDC} has an optimal solution (\cite[Corollary 6]{CirLoeWei18}). Moreover, under these assumptions the methods in \cite{CirLoeWei18} compute optimal solutions of \eqref{ConcMinDC}, see \cite[Algorithm 2, Theorem 16]{CirLoeWei18} for the primal algorithm, \cite[Algorithm 4, Theorem 22]{CirLoeWei18} for the dual algorithm, and \cite[Section 6]{CirLoeWei18} for the extension to the case of the interior of $C$ being empty. 

\begin{theorem}\label{th3}
Let problem \eqref{P} with $g$ being polyhedral have an optimal solution and let $h$ be closed. Then, for the associated problem \eqref{ConcMinDC}, assumptions (M) and (B) are satisfied for $P = \epi g$ and the polyhedral convex cone $C = 0^+\epi g$.
\end{theorem}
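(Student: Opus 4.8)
The plan is to check the two conditions separately, noting up front that (B) is essentially definitional while (M) carries all the content. For (B), I would simply unwind the definitions: with $P = \epi g$ and $C = 0^+\epi g$ we have $0^+P = 0^+\epi g = C$, so the inclusion $0^+P \subseteq C$ holds trivially (in fact with equality). Since $g$ is polyhedral, $\epi g$ is a polyhedron and hence $C = 0^+\epi g$ is genuinely a polyhedral convex cone, so it is of the type the assumptions require. No further work is needed here.

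For (M), the first step is to extract the consequence of the standing hypotheses. Because \eqref{P} has an optimal solution and $h$ is closed, Corollary \ref{cor1} applies and delivers in particular condition (iii'), that is $C = 0^+\epi g \subseteq 0^+\epi h$. This is the decisive input: it tells me that every recession direction of $\epi g$ is also a recession direction of the full epigraph $\epi h$, which is exactly the leverage I need to compare values of $f$.

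The second step is to translate the monotonicity requirement into an inequality for $h$ and then discharge it using (iii'). Take any two points $(x,r)$ and $(x',r')$ in $\R^n\times\R$ with $(x',r') - (x,r) = (d,s) \in C$; I must show $f(x,r) \le f(x',r')$. Substituting $f(x,r) = r - h(x)$, this inequality is equivalent to $h(x+d) \le h(x) + s$. If $x \notin \dom h$ the right-hand side is $+\infty$ and there is nothing to prove, so I may assume $x \in \dom h$, i.e. $(x,h(x)) \in \epi h$. Since $(d,s) \in C \subseteq 0^+\epi h$, translating this point by $(d,s)$ keeps it in the epigraph, giving $(x+d,\,h(x)+s) \in \epi h$ and therefore $h(x+d) \le h(x) + s$. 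This yields $f(x,r) \le f(x',r')$ and establishes (M).

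The point demanding the most care is the role of the closedness of $h$. It is precisely what lets me invoke Corollary \ref{cor1} and obtain the clean containment (iii') for the \emph{entire} epigraph $\epi h$, rather than merely the weaker condition (iii) of Theorem \ref{th1}, which only controls $0^+(\epi h \cap (\dom g \times \R))$. Global invariance of $\epi h$ is what makes the argument go through for \emph{every} $x \in \dom h$, whereas condition (iii) would leave the inequality uncertain at points of $\dom h \setminus \dom g$ — and since $C$-monotonicity is a condition on $f$ over all of $\R^{n+1}$, those points cannot simply be ignored. Once (iii') is secured, the remainder is just a routine reading of the recession-cone definition as the epigraph inequality for $h$.
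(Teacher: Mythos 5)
Your proof is correct and follows essentially the same route as the paper's: (B) holds by definition, and (M) is obtained by invoking Corollary \ref{cor1}~(iii') to get $0^+\epi g \subseteq 0^+\epi h$ and then translating the point $(x,h(x)) \in \epi h$ along the recession direction. If anything, your case distinction is slightly cleaner than the paper's, since you correctly split on $x \notin \dom h$ (where $f(x,r) = -\infty$ for the objective \eqref{obj}), whereas the paper's proof splits on $x \notin \dom g$, a case that matches the modified objective \eqref{obj1} of the subsequent remark rather than \eqref{obj} itself.
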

\begin{proof}
The set $C=0^+\epi g$ is a polyhedral convex cone. Obviously, (B) holds. It remains to show (M). Let $(x,r),(y,s) \in \R^n\times \R$ such that
$$\begin{pmatrix}
	y-x\\s-r
\end{pmatrix} \in C = 0^+\epi g \overset{\text{Cor.\ \ref{cor1} (iii')}}{\subseteq} 0^+\epi h.$$ 
If $x \notin \dom g$, then $f(x,r) = -\infty \leq f(y,s)$. 
We have
$$
	\underbrace{(x,h(x))}_{\in \epi h} + \underbrace{(y-x,s-r)}_{\in 0^+\epi h} = (y,h(x)+s-r) \in \epi h.
$$
By definition of $\epi h$, we obtain $h(x)+s-r \geq h(y)$ and hence
$r-h(x) \leq s-h(y)$, which proves (M).
\end{proof}

\begin{remark}
	In the previous theorem, the assumption of $h$ being closed can be omitted if the definition of $f$ in \eqref{obj} is replaced by
	\begin{equation}\label{obj1}
		f(x,r)\colonequals \begin{cases} r-h(x) & \text{ if } (x,r)\in \dom g \\ -\infty & \text{ otherwise} \end{cases}.
	\end{equation}
	The proof is similar by using Theorem \ref{th1} (iii) instead of Corollary \ref{cor1} (iii').
\end{remark}

% \begin{proof}
% The set $C=0^+\epi g$ is a polyhedral convex cone. Obviously, (B) holds. It remains to show (M). Let $(x,r),(y,s) \in \R^n\times \R$ such that
% $$\begin{pmatrix}
% 	y-x\\s-r
% \end{pmatrix} \in C = 0^+\epi g \overset{\text{Th.\ \ref{th1} (iii)}}{\subseteq} 0^{+}(\epi h \cap (\dom g \times \mathbb{R})).$$
% If $x \notin \dom g$, then $f(x,r) = -\infty \leq f(y,s)$.
% Let $x \in \dom g$. From the fact $\epi g \subseteq \dom g \times \R$ we conclude $0^+ \epi g \subseteq 0^+(\dom g \times \R)$ by Proposition \ref{recc}. Thus
% $$
% 	(y,s)=\underbrace{(x,r)}_{\in \dom g \times \R}+\underbrace{(y-x,s-r)}_{\in 0^+(\dom g \times \R)} \in \dom g \times \R.
% $$
% Moreover, we have
% $$
% 	\underbrace{(x,h(x))}_{\in \epi h \cap (\dom g \times \mathbb{R})} + \underbrace{(y-x,s-r)}_{\in 0^+(\epi h \cap (\dom g \times \mathbb{R})} = (y,h(x)+s-r) \in \epi h \cap (\dom g \times \mathbb{R})).
% $$
% By definition of $\epi h$, we obtain $h(x)+s-r \geq h(y)$ and hence
% $r-h(x) \leq s-h(y)$, which proves (M).
% \end{proof}

The cone $C$ in the previous theorem is not necessarily pointed, as required for the solution methods of \cite{CirLoeWei18}, see above. However, pointedness can be achieved by a reformulation of problem \eqref{P}: Denote by $L$ the {\em lineality space} of the convex function $g$ which is defined by
$$ L \colonequals \left\{ x \in \R^n \st \exists r \in \R: \; \begin{pmatrix}
x \\ r	\end{pmatrix} \in \lineal (\epi g) \right\}.$$
Let $L^\bot$ be the orthogonal complement of $L$. For some fixed $\bar x \in \dom g$ we define
\begin{equation}\label{eq8}
	\bar g (x) \colonequals \begin{cases} g(x) & \text{ if } x-\bar x \in L^\bot\\ \infty & \text{ otherwise} \end{cases}.
\end{equation}
We denote by ($\rm \overline{DC}$) the polyhedral d.c.\ optimization problem \eqref{P} where $g$ is replaced by $\bar g$.

\begin{proposition}\label{p6} Let \eqref{P} with $g$ being polyhedral have an optimal solution. Then $\rm(\overline{DC})$ has an optimal solution and every optimal solution of $\rm(\overline{DC})$ is also an optimal solution of \eqref{P}.
\end{proposition}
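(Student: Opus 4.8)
The plan is to exploit that the objective $g-h$ is constant along every direction in the lineality space $L$, so that passing from \eqref{P} to $(\overline{\rm DC})$ — which merely restricts the feasible set to the affine slice $\bar x + L^\bot$ — neither changes the optimal value nor discards every minimizer.

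First I would record the consequences of the hypothesis. Since \eqref{P} has an optimal solution, Theorem~\ref{th1} supplies conditions (i)--(iii); in particular $\dom g\subseteq\dom h$ and $0^+\epi g\subseteq 0^+(\epi h\cap(\dom g\times\R))\subseteq 0^+\epi h$. Because $\lineal(\epi g)\subseteq 0^+\epi g$ and $\lineal(\epi g)$ is a subspace (hence symmetric), this yields $\lineal(\epi g)\subseteq\lineal(\epi h)$. Now take $d\in L$ with $(d,r)\in\lineal(\epi g)$ for some $r\in\R$. From $(d,r),(-d,-r)\in 0^+\epi g$ one checks that $\dom g$ is invariant under translation by $\R d$ and that $g(x+td)=g(x)+tr$ for every $x\in\dom g$ and $t\in\R$; the inclusion $\lineal(\epi g)\subseteq\lineal(\epi h)$ gives the same identity for $h$ with the identical slope $r$. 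Subtracting, $(g-h)(x+td)=(g-h)(x)$ for all $x\in\dom g$ and $t\in\R$, i.e.\ the objective is invariant along $L$.

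Next I would deduce both assertions from this invariance together with the orthogonal splitting $\R^n=L\oplus L^\bot$. Let $x^0$ solve \eqref{P} and write $x^0-\bar x=u+v$ with $u\in L$, $v\in L^\bot$, and put $z\colonequals x^0-u=\bar x+v$. Invariance along $u\in L$ gives $z\in\dom g$ and $(g-h)(z)=(g-h)(x^0)$, while $z-\bar x=v\in L^\bot$ forces $\bar g(z)=g(z)$, so $z\in\dom\bar g$ and $(\bar g-h)(z)$ equals the optimal value of \eqref{P}. Since $\dom\bar g\subseteq\dom g$ and $\bar g=g$ on $\dom\bar g$, the optimal value of $(\overline{\rm DC})$ is at least that of \eqref{P}, while $z$ shows it is also at most that value; hence the two optimal values coincide and $z$ attains the $(\overline{\rm DC})$ value, so $(\overline{\rm DC})$ has an optimal solution. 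Conversely, if $z'$ solves $(\overline{\rm DC})$, then $z'\in\dom\bar g\subseteq\dom g$ and $(g-h)(z')=(\bar g-h)(z')$ equals the common optimal value, so $z'$ solves \eqref{P}.

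The only substantive step is the invariance in the second paragraph, and within it the transfer of lineality from $\epi g$ to $\epi h$; everything else is bookkeeping with $\R^n=L\oplus L^\bot$. I expect the main care to be needed in justifying $\lineal(\epi g)\subseteq\lineal(\epi h)$ \emph{without} assuming $h$ closed — this is precisely where Theorem~\ref{th1}(iii), rather than the simpler (iii') of Corollary~\ref{cor1}, must be used — and in confirming that the common second coordinate $r$ of a lineality direction is simultaneously the slope of $g$ and of $h$ along $d$, which is what makes the difference cancel.
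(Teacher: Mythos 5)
Your strategy is the paper's: split $\R^n=L\oplus L^\bot$, show $g-h$ is invariant along $L$, and project. But the transfer step as you state it is false. The chain $0^+\epi g\subseteq 0^+(\epi h\cap(\dom g\times\R))\subseteq 0^+\epi h$ uses, in its second inclusion, the monotonicity ``$A\subseteq B\Rightarrow 0^+A\subseteq 0^+B$'', which (Proposition~\ref{recc}) is only valid when $B$ is closed --- and $h$ is \emph{not} assumed closed in Proposition~\ref{p6}; the paper's Example after Corollary~\ref{cor1} exists precisely to warn against this. Concretely, the stronger conclusion you draw, $\lineal(\epi g)\subseteq\lineal(\epi h)$, can fail: take $h(x_1,x_2)=0$ for $x_2>0$, $h(x_1,0)=|x_1|$, $h=\infty$ for $x_2<0$ (convex, not closed), and let $g$ be $0$ on $\{x\in\R^2 \mid x_2\geq 1\}$ and $\infty$ elsewhere. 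Then \eqref{P} has optimal solutions and Theorem~\ref{th1}(iii) holds, since $0^+\epi g$ and $0^+(\epi h\cap(\dom g\times\R))$ both equal $\{(u,v,s) \mid v\geq 0,\, s\geq 0\}$; yet $((1,0),0)\in\lineal(\epi g)\setminus 0^+\epi h$, because $(0,0,0)\in\epi h$ while $(1,0,0)\notin\epi h$. So your closing remark that Theorem~\ref{th1}(iii) ``must be used'' to justify $\lineal(\epi g)\subseteq\lineal(\epi h)$ is mistaken: (iii) cannot justify it, because the inclusion is simply not true without closedness of $h$.

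The gap is localized and the repair is exactly what the paper does: from $\pm(d,r)\in 0^+\epi g\subseteq 0^+(\epi h\cap(\dom g\times\R))$ conclude $(d,r)\in\lineal(\epi h\cap(\dom g\times\R))=\lineal(\epi(h\vert_{\dom g}))$, and then (by \cite[Theorem 8.8]{Rockafellar72}, or the same elementary epigraph argument you apply to $g$) obtain $h(x+td)=h(x)+tr$ for all $x\in\dom g$ and $t\in\R$. That weaker, on-$\dom g$ identity is all your bookkeeping actually consumes, since you only ever evaluate $h$ at points of $\dom g$ (at $x^0$, $z$, $z'$). With this substitution your proof is correct and is in substance the paper's proof: the same lineality-transfer via Theorem~\ref{th1}(iii) and the same projection onto $\{\bar x\}+L^\bot$. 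Your packaging is marginally cleaner in two respects: you get existence for $(\overline{\rm DC})$ directly from the value comparison instead of re-invoking Theorem~\ref{th1} (which, as stated in the paper, glosses over the point that condition (iii) must be rechecked with $\dom\bar g$ in place of $\dom g$), and you argue optimality directly rather than by the paper's contradiction through the auxiliary point $\hat x=(\{\tilde x\}+L)\cap(\{\bar x\}+L^\bot)$.
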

\begin{proof}
	We have $\dom \bar g \neq \emptyset$, $\dom \bar g \subseteq \dom g$ and $0^+ \epi \bar g \subseteq 0^+ \epi g$. Theorem \ref{th1} yields the first statement. Now let $x^0$ be an optimal solution of the modified problem  $\rm(\overline{DC})$. The point $x^0$ is feasible for \eqref{P}. Assume there is some $\tilde x \in \dom g$ such that $g(\tilde x)-h(\tilde x) < g(x^0)-h(x^0) = \bar g(x^0)-h(x^0)$. Define 
	$$\hat x \colonequals  (\{\tilde x\} + L)\cap(\{\bar x\}+L^\bot) \in \dom \bar g.$$
We show that $g(\tilde x)-h( \tilde x) = \bar g(\hat x)- h(\hat x)$. Indeed, we have $\hat x - \tilde x \in L$, hence there is some $r \in \R$ such that 
$$ \begin{pmatrix}
	\hat x-\tilde x\\ r
\end{pmatrix} \in \lineal(\epi g).$$
From Theorem \ref{th1} (iii) we conclude that
$$ \begin{pmatrix}
	\hat x-\tilde x\\ r
\end{pmatrix} \in \lineal(\epi h \cap (\dom g \times \R))= \lineal(\epi (h \vert_{\dom g})),$$
where $h\vert_{\dom g}$ is the function that coincides with $h$ on $\dom g$ and is $\infty$ elsewhere. From \cite[Theorem 8.8]{Rockafellar72} we conclude that
\begin{equation}\label{eq_13}
	g(x + \lambda (\hat x - \tilde x)) = g(x) + \lambda r
\end{equation} 
for all $x \in \R^n$ and all $\lambda \in \R$.
Likewise we get
\begin{equation}\label{eq_14}
	h(\underbrace{x + \lambda (\hat x - \tilde x)}_{\in \dom g}) = h(x) + \lambda r
\end{equation}
for all $x \in \dom g$ and all $\lambda \in \R$. 
We obtain
$$ g(\tilde x) - h(\tilde x) \stackrel{\eqref{eq_13},\eqref{eq_14}}{=} g(\tilde x + \lambda (\hat x - \tilde x)) - h(\tilde x + \lambda (\hat x - \tilde x)) \stackrel{\lambda = 1}{=} g(\hat x) - h(\hat x).$$
Since $\hat x \in \dom \bar g$, we have $g(\hat x)=\bar g(\hat x)$. Together we have $\bar g(\hat x)- h(\hat x) < \bar g(x^0)-h(x^0)$ which contradicts the assumption that $x^0$ is optimal for $\rm(\overline{DC})$.
\end{proof}

The following example shows that an optimal solution of \eqref{P} is not necessarily an optimal solution of $\rm(\overline{DC})$.

\begin{example} Let $g,h:\R\to \R$, $g\equiv 0$ and $h\equiv 0$. Then $L=\R$, $L^\bot=\{0\}$. We have $\bar g(0)=0$ and $\bar g(x)=\infty$ if $x\neq 0$. Thus $0$ is the only optimal solution of $\rm(\overline{DC})$ but every $x\in \R$ is optimal solution of \eqref{P}.
\end{example}

Summarizing the results, we solve \eqref{P} with $g$ being polyhedral by the following procedure:
\begin{itemize}
	\item[(1)] Check whether \eqref{P} has an optimal solution or not using Theorem \ref{th1}, if not, stop.
	\item[(2)] Determine $\bar x \in \dom g$  and $L^\bot$ in order to define the function $\bar g$ in \eqref{eq8}.
	\item[(3)] Solve \eqref{ConcMinDC} with $g$ replaced by $\bar g$ using the methods of \cite{CirLoeWei18}.
\end{itemize}

In the case where $h$ is polyhedral, we need to assume additionally that $g$ is closed and \eqref{p3} has an optimal solution for every $y \in \dom g^*$. Then we can check whether or not \eqref{P} has an optimal solution by using Theorem \ref{th2}. If so, by Theorem \ref{th1} we know that \eqref{D} has an optimal solution. An optimal solution $y$ of \eqref{D} can be obtained by the same method (steps (2) and (3) of the above procedure) but applied to \eqref{D} rather than \eqref{P} (replace $g$ by $h^*$ and $h$ by $g^*$). Finally, we solve \eqref{p3}, which provides an optimal solution of \eqref{P}.

If both $g$ and $h$ are polyhedral, \eqref{P} can be solved by two different methods: We speak about the {\em primal method} in case we use the method where $g$ is required to be polyhedral. The term {\em dual method} refers to the method where $h$ is required to be polyhedral. Furthermore there are two different tests for existence of an optimal solution of \eqref{P}. The test in Corollary \ref{cor48} (c) is referred to as {\em primal existence test} whereas (d) in Corollary \ref{cor48} is called {\em dual existence test}.

\section{Numerical results}

We implemented the results of this article in Matlab 9.6 by using {\it bensolve tools}, version 2.3, see \cite{bt_paper,bt}. The code and the test instances are available at \url{http://tools.bensolve.org/dcsolve}. By two (new) commands {\tt dcsolve} and {\tt dcdsolve} the user can run, respectively, the primal and dual method described in the previous section. The input arguments of both commands are two arbitrary polyhedral convex functions $g$ and $h$ in the usual format of Bensolve Tools. Both commands solve arbitrary polyhedral d.c.\ optimization problems (of small size) or certify that no optimal solution exists. 

The following two numerical examples were run on a computer with Intel\textregistered\ Core\texttrademark\ i5
CPU with 3.3 GHz.

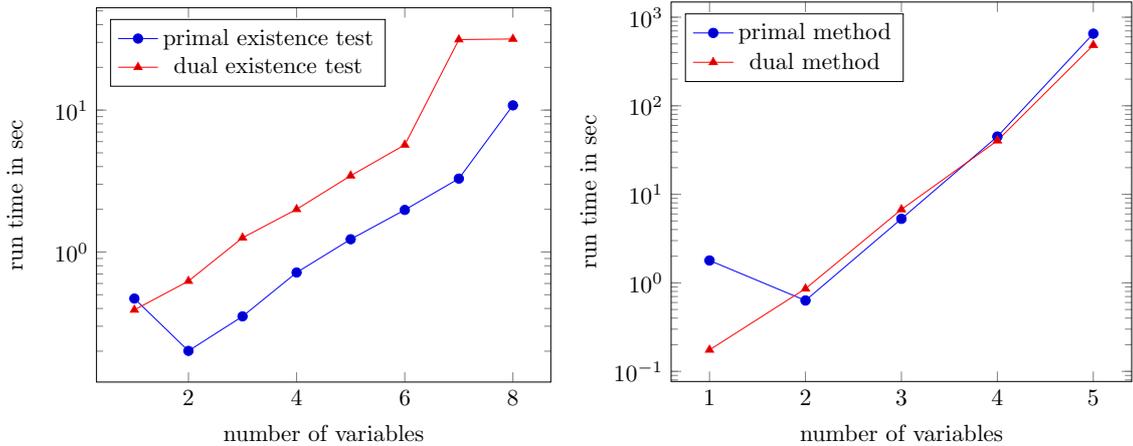
\begin{figure}[h]
	\resizebox{0.5\textwidth}{!}{%
	\begin{tikzpicture}[scale=0.5]
		\begin{axis}[
			ymode=log,
			xlabel=number of variables,
			ylabel=run time in sec,
			legend pos=north west]
			\addplot+[mark=*] table {ex1_prim_exist_test.dat};
			\addlegendentry{primal existence test}
			\addplot+[mark=triangle*] table {ex1_dual_exist_test.dat};
			\addlegendentry{dual existence test}
		\end{axis}
	\end{tikzpicture}
	}
	\resizebox{0.5\textwidth}{!}{%
	\begin{tikzpicture}
		\begin{axis}[
			ymode=log,
			xlabel=number of variables,
			ylabel=run time in sec,
			legend pos=north west]
			\addplot+[mark=*] table {ex1_prim_sol.dat};
			\addlegendentry{primal method}
			\addplot+[mark=triangle*] table {ex1_dual_sol.dat};
			\addlegendentry{dual method}
		\end{axis}
	\end{tikzpicture}
	}
	\caption{Numerical results for Example \ref{ex1}. The number of columns of $G$ and $H$ are $m_G=20$ and $m_H=15$.
	Left: Run time of test for existence of solutions in dependance of the number of variables.
	Right: Run time of primal and dual solution algorithms (without existence test) in dependance of number of variables.}
	\label{fig_ex1}
\end{figure}

\begin{example}\label{ex1}
	Let $A \in \R^{n \times m}$ be a matrix and denote by $a^i$ its columns. We define a polyhedral convex function $f_A:\R^n \to \R$ by
	$$ f_A(x) \colonequals \sum_{i=1}^m \|x-a^i\|_1,$$
	where $\|y\|_1 = \sum_{j=1}^n |y_j|$ denotes the sum norm of a vector $y$. Given two matrices $G \in \R^{n\times m_G}$ and $H \in \R^{n \times m_H}$ we consider the polyhedral d.c.\ optimization problem 
	\begin{equation}\label{dc_ex1}
		\min_{x\in \R^n} f_G(x)-f_H(x).
	\end{equation}
	Problems of this type occur in locational analysis, see e.g. \cite{LoeWag17} and the references therein.
	In Figure 1 numerical results are depicted for matrices $G$ and $H$ with components $g_{ij}=sin(i+j)$ and $h_{ij}=cos(i+j)$ (just to make the results easily reproducible in comparison to random numbers). The recession cone of $\epi f_A$ is just the recession cone of $\epi (m \|\cdot\|_1)$ and $\dom f_A = \R^n$.
	Thus, by Corollary \ref{cor1}, a solution of \eqref{dc_ex1} exists if and only if $m_G \geq m_H$.
	Figure \ref{fig_ex1} (left) shows the run time of a numeric verification of this fact for some problem instances by checking the conditions of Corollary \ref{cor1} (primal existence test) and Theorem \ref{th2} (dual existence test).
	Figure \ref{fig_ex1} (right) shows the run time of the primal and dual solution methods.

\end{example}

The following example from \cite{Ferrer2014} was solved in \cite{LoeWag17} using the MOLP solver Bensolve \cite{bensolve_paper}.
We implemented the algorithms of \cite{LoeWag17} with Bensolve Tools and compare them with our algorithms. 
While the methods of \cite{LoeWag17} compute all vertices of $\epi g$ in the primal algorithm and all vertices of $\epi h^*$ in the dual algorithm, we compute only part of these vertices by using the {\tt qcsolve} command of Bensolve Tools.
We observe a better performance for the bigger instances.

\begin{figure}[h]
	\resizebox{0.5\textwidth}{!}{%
	\begin{tikzpicture}
		\begin{axis}[
			ymode=log,
			xlabel=number of variables,
			ylabel=time in sec,
			legend pos=north west]
			\addplot+[mark=*] table {ex2_prim_exist_test.dat};
			\addlegendentry{primal existence test};
			\addplot+[mark=triangle*] table {ex2_dual_exist_test.dat};
			\addlegendentry{dual existence test}
		\end{axis}
	\end{tikzpicture}
	}
	\resizebox{0.5\textwidth}{!}{%
	\begin{tikzpicture}
		\begin{axis}[
			ymode=log,
			xlabel=number of variables,
			ylabel=time in sec,
			legend pos=north west]
			\addplot+[mark=*, mark options={draw=blue, fill=blue}] table {ex2_prim_sol.dat};
			\addlegendentry{primal method};
			\addplot+[mark=triangle*, mark options={draw=red, fill=red}] table {ex2_dual_sol.dat};
			\addlegendentry{dual method}
			\addplot+[blue, mark=*, mark options={draw=blue, fill=blue, solid}, dashed] table {ex2_prim_alt.dat};
			\addlegendentry{primal alg.\ of \cite{LoeWag17}}
			\addplot+[red, mark=triangle*, mark options={draw=red, fill=red, solid}, dashed] table {ex2_dual_alt.dat};
			\addlegendentry{dual alg.\ of \cite{LoeWag17}}
		\end{axis}
	\end{tikzpicture}
	}
	\caption{Numerical results for Example \ref{ex2}. Left: existence tests.
	Right: Comparison of our algorithms (without existence test)
	with the methods from \cite{LoeWag17} (also without existence test). 
	We observe that both dual methods perform better than the primal ones. 
	This is due to the easier structure of $h$ in comparison to $g$.
	We observe that our methods perform better in case of instances
	with sufficiently many variables.
	}
	\label{fig_ex2}
\end{figure}
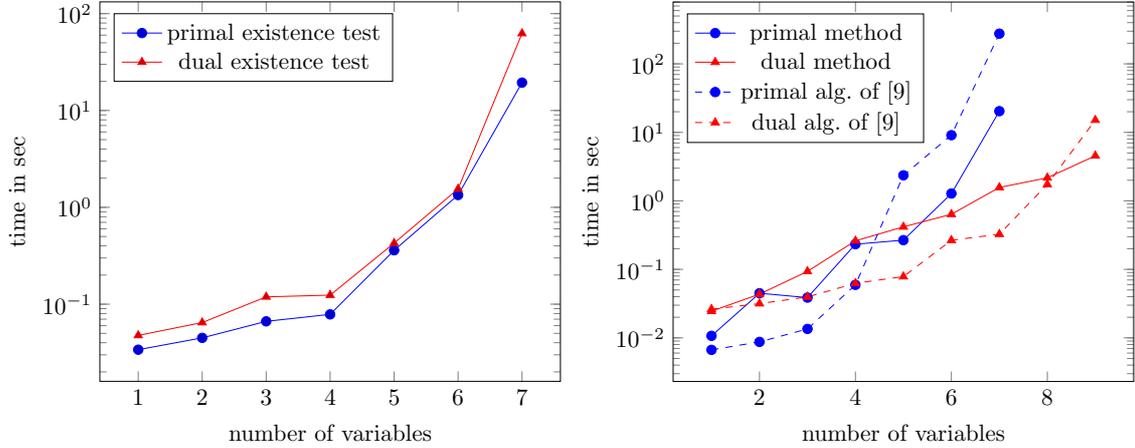

\begin{example} \label{ex2}
	Consider the polyhedral d.c.\ optimization problem \eqref{P} with
	$$ g(x) = |x_1-1|+200 \sum_{i=2}^n  \max \left\{0,|x_{i-1}|-x_i\right\}$$
	and
	$$ h(x) = 100 \sum_{i=2}^n  \left(|x_{i-1}|-x_i\right).$$
	One can easily verify that the all-one vector provides an optimal solution. 
	In Figure \ref{fig_ex2}, the run time of the primal and dual solution method
	proposed in this article is compared to the primal and dual method of \cite{LoeWag17}.
\end{example}

\section{Summary}

We characterized the existence of global optimal solutions of polyhedral d.c.\ optimization problems in Theorem \ref{th1}, Theorem \ref{th2} and Corollary \ref{cor48} depending on whether the first, the second, or both components of the objective function are polyhedral. We provided a solution procedure based on both an existence tests and a reformulation of the polyhedral d.c.\ optimization problem into a quasi-concave minimization problem. Numerical experiments were run for the case where both components of the objective function are polyhedral.
%\bibliography{literatur}

\end{document}